\numberwithin{equation}{section}
\newtheorem{theorem}{Theorem}[section]
\newtheorem{proposition}[theorem]{Proposition}
\newtheorem{lemma}[theorem]{Lemma}
\newtheorem{definition}[theorem]{Definition}
\newtheorem{The main theorem}[theorem]{The main theorem}
\theoremstyle{definition}
\begin{document}

\begin{frontmatter}

%% Title, authors and addresses

%% use the tnoteref command within \title for footnotes;
%% use the tnotetext command for the associated footnote;
%% use the fnref command within \author or \address for footnotes;
%% use the fntext command for the associated footnote;
%% use the corref command within \author for corresponding author footnotes;
%% use the cortext command for the associated footnote;
%% use the ead command for the email address,
%% and the form \ead[url] for the home page:
%%
%% \title{Title\tnoteref{label1}}
%% \tnotetext[label1]{}
%% \author{Name\corref{cor1}\fnref{label2}}
%% \ead{email address}
%% \ead[url]{home page}
%% \fntext[label2]{}
%% \cortext[cor1]{}
%% \address{Address\fnref{label3}}
%% \fntext[label3]{}

%\dochead{}
%% Use \dochead if there is an article header, e.g. \dochead{Short communication}
%% \dochead can also be used to include a conference title, if directed by the editors
%% e.g. \dochead{17th International Conference on Dynamical Processes in Excited States of Solids}

\title{On the approximation of weakly   plurifinely plurisubharmonic functions}

%% use optional labels to link authors explicitly to addresses:
%% \author[label1,label2]{<author name>}
%% \address[label1]{<address>}
%% \address[label2]{<address>}

%\author{Nguyen Xuan Hong}
\author[label1]{Nguyen Xuan Hong}%  \fnref{label1a}} \fntext[label1a]{This paper has been written during a stay of the first author at  the Vietnam Institute for Advanced Study in Mathematics. He wishes to thank the institution for their kind hospitality and support.   This research is funded by Ministry of education and training}
\address[label1]{Department of Mathematics, Hanoi National University of Education, 136 Xuan Thuy Street,
Cau Giay District, Hanoi, Vietnam}
\ead{xuanhongdhsp@yahoo.com}

\author[label3]{Hoang Van Can}
\address[label3]{Department of Basis Sciences, 
University of Transport Technology, 54 Trieu Khuc, Thanh Xuan District, Hanoi, Vietnam}  
\ead{ vancan.hoangk4@gmail.com}

\begin{abstract}
%% Text of abstract
In this note, we study   the approximation of singular  plurifinely plurisubharmonic function $u$ defined on a plurifinely domain $\Omega$. Under some conditions, we prove that $u$ can be approximated  by an increasing  sequence of   plurisubharmonic functions defined  on Euclidean neighborhoods of  $\Omega$.  
\end{abstract}

\begin{keyword}
%% keywords here, in the form: keyword \sep keyword JMAA-15-3586 JMAA-14-3859
complex variables \sep
plurifinely pluripotential theory   \sep plurifinely plurisubharmonic functions

%% PACS codes here, in the form: \PACS code \sep code

%% MSC codes here, in the form: \MSC code \sep code
%% or \MSC[2008] code \sep code (2000 is the default)
 \MSC[2010] 32U05 \sep 32U15
\end{keyword}

\end{frontmatter}

%%
%% Start line numbering here if you want
%%
% \linenumbers

%% main text

\section{Notation and main result}

%Let $U$ be  a   Euclidean open set in $\mathbb C^n$. We  denote  by $PSH^-(U)$   the  family  of   negative plurisubharmonic  functions in $U$.

Let $D$ be an open set in $\mathbb C^n$ and let   $ PSH^-(D)$ be the family of negative plurisubharmonic functions in $D$. 
The plurifine topology $\mathcal F$ on a  Euclidean open set $D$  is the smallest topology that makes all plurisubharmonic functions on $D$ continuous. 
Notions pertaining to the plurifine topology are indicated with the prefix $\mathcal F$ to distinguish them from notions pertaining to the Euclidean topology on $\mathbb C^n$.
For a set $A\subset \mathbb  C^n$ we write $\overline{A}$ for the closure of $A$ in the one point compactification
of $\mathbb C^n$, $\overline{A}^{\mathcal F}$ for the $\mathcal F$-closure of $A$ and $\partial _{\mathcal F}A$ for the $\mathcal F$-boundary of $A$.

Let $\Omega$ be  a bounded  $\mathcal F$-domain in   $\mathbb C^n$. A function $u : \Omega \to  [-\infty, +\infty) $  is said to be  $\mathcal F$-plurisubharmonic if $u$ is $\mathcal F$-upper semicontinuous and for every complex line $l$ in $\mathbb C^n$, the restriction of $u$ to any $\mathcal F$-component of the finely open subset $l \cap \Omega$  of $l$ is either finely subharmonic or $\equiv -\infty$.  
El Kadiri,  Fuglede and Wiegerinck \cite{KFW11} proved   the most important properties of the  $\mathcal F$-plurisubharmonic functions.  
El Kadiri and Wiegerinck \cite{KW14}   defined   the complex Monge-Amp\`{e}re operator for finite $\mathcal F$-plurisubharmonic functions on an $\mathcal F$-domain $\Omega$. Recently,    Hong and coauthors have been successfully pushing the theory of $\mathcal F$-plurisubharmonic functions (see   \cite{Hong17},  \cite{HHV17},  \cite {HV16},  \cite{TVH}).    
The aim of this note is to study  the %sufficient
conditions on $u$ and  $\Omega$ such that $u$ can be approximated  by an increasing  sequence of   plurisubharmonic functions defined  on Euclidean neighborhoods of  $\Omega$.   

When $\Omega$  is bounded Euclidean  domain with  $\mathcal C^1$-boundary.  Forn\ae ss and  Wiegerinck  \cite{FW89} proved that if $u$ is continuous on $\overline{\Omega}$ then $u$ can be approximated uniformly on $\overline{\Omega}$  by a sequence of smooth plurisubharmonic functions defined  on Euclidean neighborhoods of  $\Omega$.  

When $\Omega$  is bounded hyperconvex  domain.  According to the results by \cite{Be06}, \cite{Be11}, \cite{CH08}, \cite{Hed10} and other authors,  the approximation is possible if the  domain $\Omega$ has  the $\mathcal F$-approximation property and  $u$  belongs to one of  the Cegrell's classes in $\Omega$.  

When $\Omega$ is bounded $\mathcal F$-domain.
In   research \cite{TVH}, the authors gave the kind of $\Omega$ and $u$ that are in line with the $\mathcal F$-set   up to make the approximation possible.  

The purpose of this note  is to  extend  the result of \cite{TVH}.   
In analogy with the  set up of the hyperconvex domain to make the approximation possible, we introduce the following.

\begin{definition}{\rm
Let  $\Omega$ be a bounded  $\mathcal F$-hyperconvex domain, 
 i.e.,  it is  a bounded, connected, and $\mathcal F$-open set such that  there exist a negative bounded plurisubharmonic function $\gamma_\Omega$ defined in a bounded hyperconvex domain $\Omega'$ such that $\Omega =\Omega' \cap \{\gamma_\Omega >-1\}$ and $-\gamma_\Omega $ is $\mathcal F$-plurisubharmonic  in $\Omega$.
We say that $\Omega$ has  the $\mathcal F$-approximation property if  there exist an increasing  sequence of negative  plurisubharmonic functions  $ \rho_j $  defined on  bounded  hyperconvex domains $\Omega_j$ such that  $\Omega\subset\Omega_{j+1}\subset\Omega_j$ and   $\rho_j\nearrow \rho \in \mathcal E_0(\Omega)$ a.e. on $\Omega$ as $j\nearrow+\infty$. Here    
\begin{align*}
\mathcal E_0(\Omega):=
\{ u \in \mathcal F\text{-} & PSH^-(\Omega) \cap L^\infty(\Omega):    \int_\Omega 
  (dd^c u)^n <+\infty 
  \\&  \text{ and  }  \forall \varepsilon>0, \ \exists  \delta>0,\  \overline{\Omega\cap  \{u<- \varepsilon\} } \subset  \Omega' \cap\{\gamma_\Omega >-1+\delta\}\} .
\end{align*}
}\end{definition}

Example 3.3 in \cite{TVH} showed that there exists 
a bounded $\mathcal F$-hyperconvex domain $\Omega$ that has   the $\mathcal F$-approximation property, moreover, it
has no Euclidean interior point exists.
For the precise definition and properties of the class $\mathcal F(\Omega)$  we refer the reader to the next section. 
Our main result is  the following theorem. 

\begin{theorem} \label{the1}
Let  $\Omega$ be a bounded $\mathcal F$-hyperconvex domain and let $u\in\mathcal F  (\Omega)$. Assume that $\Omega$ has the $\mathcal F$-approximation property. Then,  there exists an increasing  sequence of    plurisubharmonic functions $u_j$ defined  on Euclidean neighborhoods of  $\Omega$ such that  $u_j\nearrow   u$ a.e. on $\Omega$ as $j \nearrow + \infty$.
\end{theorem}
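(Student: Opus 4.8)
The plan is to construct the approximants $u_j$ as upper envelopes, on the shrinking Euclidean neighborhoods $\Omega_j$ furnished by the $\mathcal F$-approximation property, of the negative plurisubharmonic functions lying below $u$ on $\Omega$. Concretely I set
\[
 u_j:=\Big(\sup\{\varphi\in PSH^-(\Omega_j):\ \varphi\le u \text{ on }\Omega\}\Big)^{*},
\]
where $*$ denotes the Euclidean upper semicontinuous regularization. Because $\Omega\subset\Omega_{j+1}\subset\Omega_j$, every competitor for $u_j$ restricts to a competitor for $u_{j+1}$, so the sequence is increasing on the common neighborhood, and each $u_j$ is plurisubharmonic on the Euclidean neighborhood $\Omega_j$ of $\Omega$. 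Thus the theorem reduces to the two inequalities $\lim_j u_j\le u$ and $\lim_j u_j\ge u$, both a.e.\ on $\Omega$.

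The inequality $\lim_j u_j\le u$ a.e.\ is immediate: each competitor satisfies $\varphi\le u$ on $\Omega$, so the unregularized supremum is $\le u$ there, and the upper regularization changes values only on a pluripolar, hence Lebesgue-null, set. The point needing care already here is that the competing family be nonempty and the $u_j$ not identically $-\infty$; for this I would exhibit an initial competitor of the form $C\rho_j$ with $C>0$ large, using that $\rho\in\mathcal E_0(\Omega)$ together with the defining relation $\Omega=\Omega'\cap\{\gamma_\Omega>-1\}$ supplies exactly the boundary control needed to keep $C\rho_j\le u$ on $\Omega$ near $\partial_{\mathcal F}\Omega$.

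The heart of the proof is the reverse inequality $\lim_j u_j\ge u$ a.e. By the results on $\mathcal F$-plurisubharmonic functions in \cite{KFW11} (quasi-continuity and the local description on complex lines), for $\mathcal F$-quasi-every $z_0\in\Omega$ with $u(z_0)>-\infty$ and every $\varepsilon>0$ there are a Euclidean ball $B\ni z_0$ and $\psi\in PSH(B)$ with $\psi\le u$ on $B\cap\Omega$ and $\psi(z_0)>u(z_0)-\varepsilon$. I would then promote this local minorant to a global competitor on $\Omega_j$ by gluing: on $B$ one takes the maximum of $\psi$ with a large negative multiple of a weight built from $\rho_j$ (and $\gamma_\Omega$), and outside $B$ one keeps that weight alone. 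Since $\rho\in\mathcal E_0(\Omega)$ and $\Omega=\Omega'\cap\{\gamma_\Omega>-1\}$ force the weight below the values of $u$ along $\partial_{\mathcal F}\Omega$, the two pieces patch to a function in $PSH^-(\Omega_j)$ that is $\le u$ on all of $\Omega$; and because $\rho_j\nearrow\rho$ a.e., the value lost at $z_0$ tends to $0$ as $j\to\infty$. This yields $\lim_j u_j(z_0)\ge u(z_0)-\varepsilon$ for $\mathcal F$-q.e.\ $z_0$, hence a.e. For an unbounded $u\in\mathcal F(\Omega)$ I would, if necessary, carry out this step on the truncations and recombine by taking successive maxima, using the uniform Monge--Amp\`ere mass bound carried by the class $\mathcal F(\Omega)$.

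I expect the gluing step to be the main obstacle. The difficulty is to manufacture, for each $j$, a single weight that is genuinely plurisubharmonic on the whole Euclidean neighborhood $\Omega_j$, that dominates the local minorant near $\partial B$ so the maximum patches across $\partial B$, and that nonetheless stays below $u$ everywhere on $\Omega$ and drops off quickly enough along $\partial_{\mathcal F}\Omega$ --- all while losing no value at the chosen point in the limit. This is precisely where the two hypotheses are indispensable: the $\mathcal F$-hyperconvexity provides the comparison function $-\gamma_\Omega$ and the relation $\Omega=\Omega'\cap\{\gamma_\Omega>-1\}$ that pins down the $\mathcal F$-boundary behavior, while the $\mathcal F$-approximation property provides the globally plurisubharmonic weights $\rho_j\nearrow\rho$ on the neighborhoods $\Omega_j$ that realize the construction and drive the convergence.
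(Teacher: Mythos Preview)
Your route is genuinely different from the paper's, and as it stands it has real gaps.

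\textbf{The local minorant step is not justified.} You assert that for $\mathcal F$-quasi-every $z_0$ there exist a Euclidean ball $B\ni z_0$ and $\psi\in PSH(B)$ with $\psi\le u$ on $B\cap\Omega$ and $\psi(z_0)>u(z_0)-\varepsilon$. The local structure theorem for $\mathcal F$-plurisubharmonic functions (Theorem~2.17 in \cite{KW14}) only gives $u=f-g$ with $f,g$ plurisubharmonic on a Euclidean neighborhood of an $\mathcal F$-open set $O\ni z_0$; it says nothing about $u$ on $(B\cap\Omega)\setminus O$, and since $\Omega$ may have no Euclidean interior point (the paper explicitly notes such examples exist), $B\cap\Omega$ need not be contained in any such $O$. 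The candidate $\psi=f-g(z_0)-\varepsilon$ is then only $\le u$ on $B\cap O$, not on $B\cap\Omega$. Nothing in \cite{KFW11} bridges this.

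\textbf{The gluing is not carried out.} You correctly flag it as the main obstacle, but the sketch does not explain how a single plurisubharmonic weight on $\Omega_j$ can simultaneously dominate $\psi$ near $\partial B$, stay below $u$ on all of $\Omega$, and lose nothing at $z_0$ in the limit. For unbounded $u\in\mathcal F(\Omega)$ the condition ``$\le u$ on $\Omega$'' is severe; your proposed initial competitor $C\rho_j$ cannot satisfy it, since $\rho_j$ is bounded while $u$ need not be. The fallback to truncations and ``successive maxima'' is not a mechanism: maxima of competitors for different truncations are not competitors for $u$.

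\textbf{What the paper does instead.} The paper avoids pointwise arguments entirely. For each $k$ it solves, on the Euclidean domain $\Omega_j$, the Monge--Amp\`ere equation $(dd^c u_{j,k})^n=1_\Omega(dd^c\max(u,-k))^n$ via Cegrell's existence theory, then sets $u_j=(\limsup_k u_{j,k})^*$. Comparison principles give $u_j\le u_{j+1}\le u$ on $\Omega$. The reverse inequality is obtained globally: one shows $(dd^c u_j)^n\ge (dd^c u)^n$ on $\Omega\cap\{u>-\infty\}$ together with the total-mass bound $\int_{\Omega_j}(dd^c u_j)^n\le\int_\Omega(dd^c\max(u,-1))^n$, and then a tailor-made uniqueness lemma (Lemma~\ref{lem1}: if $u\ge v$, $(dd^c u)^n\le(dd^c v)^n$ on $\{v>-\infty\}$, and the total masses of $\max(u,-1)$ and $\max(v,-1)$ agree, then $u=v$) forces the $\mathcal F$-regularized limit $v$ of $u_j$ to equal $u$. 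That lemma, rather than any local gluing, is the engine of the proof.
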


%For the case $u\in \mathcal F_p(\Omega)$ with  $p>0$, Theorem \ref{the1} is solved by \cite{TVH}. 
%Note that, up to now,   the complex Monge-Amp\`ere measure can not be defined  for unbounded  $\mathcal F$-plurisubharmonic functions in the class $\mathcal F(\Omega)$.
The note is organized as follows. 
In Section 2,  we introduce and investigate the class $\mathcal F (\Omega)$. 
Section 3 is devoted to prove  Theorem \ref{the1}.

\section{The class $\mathcal F(\Omega)$}
Some elements of pluripotential  theory (plurifine potential  theory) that will be used  throughout  the paper can be  found  in \cite{ACCH}-\cite{W12}.  We denote by   $\mathcal F\text{-}  PSH^-(\Omega)$   the set of negative   $\mathcal F\text{-}$plurisubharmonic functions defined in $\mathcal F\text{-}$open set $\Omega$.
First, we recall the definition of the complex  Monge-Amp\`ere  measure for  finite  $\mathcal F\text{-}$plurisubharmonic functions.

\begin{definition}{\rm 
Let $\Omega$ be an $\mathcal F$-open set in $\mathbb C^n$ and let  $QB(\Omega)$ be the trace of $QB(\mathbb C^n)$ on $\Omega$, where $QB(\mathbb C^n)$  denotes   the   $\sigma$-algebra  on $\mathbb C^n$ generated by the Borel sets and the pluripolar subsets of  $\mathbb C^n$.
Assume that 
$u_1, \ldots , u_n \in \mathcal F\text{-} PSH(\Omega)$ are  finite. 
Using the quasi-Lindel\"of property of the plurifine topology and Theorem 2.17 in \cite{KW14}, there exist a pluripolar set $E\subset \Omega$, a sequence of $\mathcal F$-open subsets  $\{O_k\}$   and   plurisubharmonic functions  $f_{j,k}, g_{j,k}$ defined in Euclidean neighborhoods of $\overline O_k$  such that    $\Omega=E \cup \bigcup_{k=1}^\infty O_k $ and   $u_j=f_{j,k} -g_{j,k}$ on $O_k$. We define $O_0:= \emptyset$ and   
\begin{equation}\label{eqqqqqqqq}
\int_A dd^c u_1 \wedge \ldots \wedge dd^c u_n  
:= \sum_{k=1}^\infty \int_{A\cap ( O_k \backslash \bigcup_{j=0}^{k-1} O_j ) } dd^c ( f_{1,k} -g_{1,k}) \wedge \ldots  \wedge dd^c ( f_{n,k} -g_{n,k}), \  
A\in QB(\Omega). 
\end{equation} 
 Theorem 3.6 in \cite{KW14} implies that  the measure  defined by \eqref{eqqqqqqqq} is  independent on $E$,   $\{O_k\}$, $\{f_{j,k}\}$ and $\{g_{j,k}\}$.
This measure  is called the complex  Monge-Amp\`ere  measure. 
}\end{definition}

Note that from Theorem 2.17 in \cite{KW14} and Lemma 4.1 in \cite{KW14} we infer at $dd^c u_1 \wedge \ldots \wedge dd^c u_n  $ is a non-negative measure  on $QB(\Omega)$.  
We now give   the following definition which is an extension of the class $\mathcal F(\Omega)$ introduced and investigated by Cegrell \cite{Ce2} when  $\Omega$ is a bounded hyperconvex domain in $\mathbb C^n$.

\begin{definition}  
{\rm 
Let $\Omega$ be a bounded $\mathcal F$-hyperconvex domain in $\mathbb C^n$. We denote by   $\mathcal F  (\Omega)$  the family of negative $\mathcal F$-plurisubharmonic functions $u$ defined on $\Omega$ such that there exist a decreasing sequence $\{\varphi _j\} \subset \mathcal E_0(\Omega)$ that converges pointwise to $u$ on $\Omega$ and 
$$\sup_{j\geq 1} \int_\Omega  (dd^c \varphi_j )^n<+\infty.$$

\noindent 
Furthermore, if $p>0$ satisfies $$\sup_{j\geq 1} \int_\Omega (1+(-\varphi_j)^p) (dd^c \varphi_j )^n<+\infty $$ 
then we say that $u \in \mathcal F_p(\Omega)$.
} \end{definition}

Note that 
$\mathcal F(\Omega) \cap L^\infty(\Omega) \subset \mathcal F_p(\Omega) \subset \mathcal F(\Omega)$ for all $p>0$.

\begin{proposition} \label{pro2-1-1}
Let $\Omega \Subset \mathbb C^n$ be a bounded $\mathcal F$-hyperconvex domain in $\mathbb C^n$ and let  $u\in \mathcal F  (\Omega) \cap L^\infty(\Omega)$. Then, the statements are holds. 

(i) If     $\{\varphi_j \} \subset \mathcal E_0(\Omega)$ such that $\varphi_j  \searrow u$ on $\Omega$ and 
$\sup_{j\geq 1} \int_\Omega (dd^c \varphi_j )^n<+\infty$  then  
$$\int_\Omega (- \rho )  (dd^c u)^n 
= \sup_{j\geq1}\int_\Omega (- \rho ) (dd^c u_j)^n, \ \ \forall \rho \in \mathcal F\text{-}PSH^-(\Omega) \cap L^\infty(\Omega).$$ 

(ii) If  $v\in \mathcal F\text{-}PSH(\Omega)$ with $u\leq v<0$ then $v\in \mathcal F   (\Omega)$ and 
$\int_{\Omega }   (dd^c v)^n \leq  \int_{\Omega}   (dd^c u)^n.$
\end{proposition}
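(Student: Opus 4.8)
\noindent\emph{Plan of proof.} Throughout I read the $u_j$ on the right-hand side of (i) as the $\varphi_j$ of the hypothesis. Fix $\rho\in\mathcal F\text{-}PSH^-(\Omega)\cap L^\infty(\Omega)$, so that the weight $-\rho\ge 0$ is bounded, and set $a_j:=\int_\Omega(-\rho)(dd^c\varphi_j)^n$. Since $\varphi_j\searrow u$ we have $\varphi_j\ge u$ for every $j$, and the key input is the weighted comparison principle for the class $\mathcal F(\Omega)$: a larger function carries smaller Monge-Amp\`ere mass against a non-negative weight. Granting this, I obtain immediately $a_j\le\int_\Omega(-\rho)(dd^c u)^n$ for all $j$, whence $\sup_j a_j\le\int_\Omega(-\rho)(dd^c u)^n$. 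This comparison step, which rests on integration by parts in the plurifine framework of \cite{KW14}, is the step I expect to be the main obstacle, since it is exactly where the $\mathcal F$-topology must be reconciled with Stokes-type identities.

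For the reverse inequality I would use the convergence of Monge-Amp\`ere measures under decreasing limits of bounded $\mathcal F$-plurisubharmonic functions, $(dd^c\varphi_j)^n\to(dd^c u)^n$ in the vague sense, established in \cite{KW14}. Approximating the bounded lower semicontinuous weight $-\rho$ from below by a sequence $g_k\nearrow-\rho$ of non-negative continuous functions with compact support in $\Omega$, I get for each fixed $k$ that $\int_\Omega g_k\,(dd^c u)^n=\lim_j\int_\Omega g_k\,(dd^c\varphi_j)^n\le\sup_j a_j$, using $g_k\le-\rho$. Letting $k\nearrow+\infty$ and invoking monotone convergence yields $\int_\Omega(-\rho)(dd^c u)^n\le\sup_j a_j$. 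Combining the two bounds gives the asserted equality, and the finiteness of the common value follows from the boundedness of $-\rho$ together with the uniform bound on $\int_\Omega(dd^c\varphi_j)^n$.

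For part (ii) the idea is to regularise $v$ from above through the sequence defining $u$. Choose $\{\varphi_j\}\subset\mathcal E_0(\Omega)$ with $\varphi_j\searrow u$ and $\sup_j\int_\Omega(dd^c\varphi_j)^n<+\infty$, and put $\psi_j:=\max(\varphi_j,v)$. Each $\psi_j$ is $\mathcal F$-plurisubharmonic and negative, and I would verify $\psi_j\in\mathcal E_0(\Omega)$ as follows: boundedness is automatic because $u\le v<0$ with $u\in L^\infty(\Omega)$ forces $v\in L^\infty(\Omega)$; the defining closure condition is inherited from $\varphi_j$ via the inclusion $\{\psi_j<-\varepsilon\}=\{\varphi_j<-\varepsilon\}\cap\{v<-\varepsilon\}\subset\{\varphi_j<-\varepsilon\}$, so the same $\delta$ works; and finiteness of the mass follows from $\psi_j\ge\varphi_j$ and the comparison principle, which give $\int_\Omega(dd^c\psi_j)^n\le\int_\Omega(dd^c\varphi_j)^n$. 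Since $\psi_j\searrow\max(u,v)=v$ with uniformly bounded masses, this exhibits $v\in\mathcal F(\Omega)\cap L^\infty(\Omega)$.

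Finally, applying part (i) with the constant weight $\rho\equiv-1$ to $u$ (with the sequence $\varphi_j$) and to $v$ (with the sequence $\psi_j$) gives $\int_\Omega(dd^c u)^n=\sup_j\int_\Omega(dd^c\varphi_j)^n$ and $\int_\Omega(dd^c v)^n=\sup_j\int_\Omega(dd^c\psi_j)^n$. The termwise inequality $\int_\Omega(dd^c\psi_j)^n\le\int_\Omega(dd^c\varphi_j)^n$ already recorded then passes to the supremum and yields $\int_\Omega(dd^c v)^n\le\int_\Omega(dd^c u)^n$. The only genuine subtlety here is the stability of the truncated closure condition under $\max(\varphi_j,v)$, which the set inclusion above settles directly.
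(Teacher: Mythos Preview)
The paper's own proof is a one–line citation to Propositions~4.2 and~4.3 of \cite{TVH}, so there is no detailed argument in the paper to compare against; your proposal is effectively an attempt to reconstruct what lies behind that citation. Your treatment of~(ii) is sound and is exactly the expected argument: $\psi_j=\max(\varphi_j,v)\in\mathcal E_0(\Omega)$ by the set inclusion $\{\psi_j<-\varepsilon\}\subset\{\varphi_j<-\varepsilon\}$ together with the mass comparison, and then $\psi_j\searrow v$ exhibits $v\in\mathcal F(\Omega)$ with the required mass bound.

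In~(i) the inequality $\sup_j a_j\le\int_\Omega(-\rho)(dd^c u)^n$ via the weighted comparison is fine. The reverse inequality, however, has a genuine gap. You propose to approximate the ``lower semicontinuous'' weight $-\rho$ from below by non-negative continuous functions $g_k$ with compact support in $\Omega$ and then invoke vague convergence $(dd^c\varphi_j)^n\to(dd^c u)^n$ from \cite{KW14}. Neither step is available here as stated. The domain $\Omega$ is only $\mathcal F$-open---the paper emphasises (after Definition~1.1) that it may have \emph{no Euclidean interior point}---and $\rho$, being $\mathcal F$-plurisubharmonic, is only $\mathcal F$-upper semicontinuous, so $-\rho$ need not be Euclidean lower semicontinuous and there is no Urysohn/Baire approximation by $g_k\in C_c(\Omega)$ in the Euclidean sense. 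Correspondingly, \cite{KW14} does not furnish a weak-$*$ convergence theorem for $(dd^c\varphi_j)^n$ against Euclidean-continuous compactly supported test functions on an $\mathcal F$-open set; the Monge--Amp\`ere measure there is defined patchwise on $\mathcal F$-open pieces through representations $u=f-g$ with $f,g$ plurisubharmonic, and limits must be taken through that structure. The argument behind \cite[Propositions~4.2--4.3]{TVH} instead stays inside the $\mathcal F$-framework: one uses the integration-by-parts identities valid for functions in $\mathcal E_0(\Omega)$ (whose defining boundary condition is precisely what makes the boundary terms vanish) to show that $j\mapsto\int_\Omega(-\rho)(dd^c\varphi_j)^n$ is monotone and to identify its limit with $\int_\Omega(-\rho)(dd^c u)^n$ directly, without passing through vague convergence against Euclidean test functions.
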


\begin{proof}
The statement  follows from Proposition 4.2 in \cite{TVH} and Proposition 4.3 in \cite{TVH}.
\end{proof}

\begin{proposition}  \label{pro99991}
Let $\Omega$ be a bounded $\mathcal F$-hyperconvex domain in $\mathbb C^n$ and let   $u \in \mathcal F  (\Omega)$.  If   $\{u_j \} \subset \mathcal F(\Omega) \cap L^\infty (\Omega) $ such that $u_j \searrow u$ in $\Omega$ as $j\nearrow+\infty$ then
$$\sup_{j\geq 1} \int_\Omega  (dd^c u_j)^n<+\infty.$$  and  
$$
\int_\Omega (dd^c \max(u, \rho))^n 
= \sup_{j\geq 1} \int_\Omega  (dd^c u_j)^n
$$
for every  $\rho \in \mathcal F \text{-} PSH^-(\Omega) \cap L^\infty(\Omega)$ with  $\sup_\Omega \rho <0$.
In particular, 
$$
\int_\Omega (dd^c \max(u,-1))^n <+\infty.
$$
\end{proposition}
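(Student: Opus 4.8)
The plan is to reduce everything to the weighted identity and the comparison contained in Proposition \ref{pro2-1-1}, applied to the doubly-indexed family obtained by capping the given sequence with a defining sequence of $u$.

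Since $u\in\mathcal F(\Omega)$, first I would fix a defining sequence $\varphi_k\in\mathcal E_0(\Omega)$ with $\varphi_k\searrow u$ and $M:=\sup_k\int_\Omega(dd^c\varphi_k)^n<+\infty$, and set $w_{j,k}:=\max(u_j,\varphi_k)$. I would check that $w_{j,k}\in\mathcal E_0(\Omega)$: it is bounded and $\mathcal F$-plurisubharmonic, the inclusion $\Omega\cap\{w_{j,k}<-\varepsilon\}\subset\Omega\cap\{\varphi_k<-\varepsilon\}$ transports the boundary condition from $\varphi_k$, and Proposition \ref{pro2-1-1}(ii) (with lower function $\varphi_k\le w_{j,k}<0$) gives $\int_\Omega(dd^cw_{j,k})^n\le\int_\Omega(dd^c\varphi_k)^n\le M$. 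The key observation is that for fixed $j$ one has $w_{j,k}\searrow u_j$ as $k\to\infty$ (because $\varphi_k\searrow u\le u_j$), while for fixed $k$ one has $w_{j,k}\searrow\varphi_k$ as $j\to\infty$ (because $u_j\searrow u\le\varphi_k$). Applying Proposition \ref{pro2-1-1}(i) with weight $\rho\equiv-1$ along each direction yields $\int_\Omega(dd^cu_j)^n=\sup_k\int_\Omega(dd^cw_{j,k})^n$ and $\int_\Omega(dd^c\varphi_k)^n=\sup_j\int_\Omega(dd^cw_{j,k})^n$. Interchanging the two suprema then gives
\[
\sup_j\int_\Omega(dd^cu_j)^n=\sup_{j,k}\int_\Omega(dd^cw_{j,k})^n=\sup_k\int_\Omega(dd^c\varphi_k)^n=M<+\infty,
\]
which is the first assertion (and incidentally shows that the supremum does not depend on the chosen sequence $u_j$).

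Next, fix $\rho\in\mathcal F\text{-}PSH^-(\Omega)\cap L^\infty(\Omega)$ with $\sup_\Omega\rho<0$. The functions $\max(\varphi_k,\rho)$ lie in $\mathcal E_0(\Omega)$ and decrease to $\max(u,\rho)$, so $\max(u,\rho)\in\mathcal F(\Omega)\cap L^\infty(\Omega)$. I would then rerun the argument of the previous paragraph with $w_{j,k}$ replaced by $\max(w_{j,k},\rho)=\max(u_j,\varphi_k,\rho)\in\mathcal E_0(\Omega)$, which decreases to $\max(u_j,\rho)$ in $k$ and to $\max(\varphi_k,\rho)$ in $j$; using Proposition \ref{pro2-1-1}(i) with weight $-1$ and the same interchange of suprema gives $\int_\Omega(dd^c\max(u,\rho))^n=\sup_j\int_\Omega(dd^c\max(u_j,\rho))^n$. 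Since $u_j\le\max(u_j,\rho)<0$, Proposition \ref{pro2-1-1}(ii) gives $\int_\Omega(dd^c\max(u_j,\rho))^n\le\int_\Omega(dd^cu_j)^n$, and therefore the inequality $\int_\Omega(dd^c\max(u,\rho))^n\le\sup_j\int_\Omega(dd^cu_j)^n$ holds.

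The remaining, and I expect main, difficulty is the reverse inequality. It amounts to the mass-conservation statement $\int_\Omega(dd^c\max(u_j,\rho))^n=\int_\Omega(dd^cu_j)^n$ for each bounded $u_j$; equivalently, capping an $\mathcal E_0(\Omega)$-function with $\rho$ leaves its total Monge-Amp\`ere mass unchanged. This is exactly where the hypothesis $\sup_\Omega\rho<0$ is indispensable: functions in $\mathcal F(\Omega)$ carry no mass near the $\mathcal F$-boundary, and since $\rho$ stays bounded away from $0$, the cap is inactive in an $\mathcal F$-neighborhood of $\partial_{\mathcal F}\Omega$, so $\max(u_j,\rho)$ and $u_j$ agree there and share the same boundary flux. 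Making this precise requires the comparison/Stokes-type results of the plurifine theory (as in \cite{KW14}, \cite{TVH}) rather than Proposition \ref{pro2-1-1} alone, and this is the step I expect to demand the most care. Granting it, the reverse inequality follows and, combined with the previous paragraph, proves $\int_\Omega(dd^c\max(u,\rho))^n=\sup_j\int_\Omega(dd^cu_j)^n$. Finally, taking $\rho\equiv-1$, which satisfies $\sup_\Omega\rho=-1<0$, yields the ``in particular'' statement, the finiteness $\int_\Omega(dd^c\max(u,-1))^n<+\infty$ being inherited from the bound $M<+\infty$ established first.
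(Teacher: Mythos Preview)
Your approach is essentially the same as the paper's: both fix a defining sequence $\varphi_k\in\mathcal E_0(\Omega)$, form the doubly indexed family $\max(u_j,\varphi_k)$, use that it decreases to $u_j$ in $k$ and to $\varphi_k$ in $j$, and interchange the two suprema. The mass-conservation step you single out as the crux, namely $\int_\Omega(dd^c\max(\varphi_k,\rho))^n=\int_\Omega(dd^c\varphi_k)^n$ when $\sup_\Omega\rho<0$, is exactly what the paper invokes as Proposition~3.4 in \cite{TVH} (the point being that $\max(\varphi_k,\rho)=\varphi_k$ on the superlevel set $\{\varphi_k>\sup_\Omega\rho\}$), so your diagnosis of where the work lies and of the needed external input is correct.
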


\begin{proof}  
Let $\{\varphi_k  \} \subset \mathcal E_0(\Omega)$ such that $\varphi _k  \searrow u$ in $\Omega$ as $k \nearrow+\infty$ and 
$$\sup_{k \geq 1} \int_\Omega  (dd^c \varphi_k )^n<+\infty.$$ 
Since $\max(u_j , \rho_k) \searrow u_j$ in $\Omega$ as $k\nearrow+\infty$, by Proposition 3.4 in \cite{TVH} and Proposition 4.2 in \cite{TVH} we infer at 
\begin{align*}
\int_\Omega  (dd^c u_j)^n 
= \sup_{k\geq 1} \int _\Omega (dd^c \max(u_j, \varphi_k ))^n 
\leq \sup_{k \geq 1} \int_\Omega  (dd^c \varphi_k )^n. 
\end{align*}
This implies that 
$$\sup_{j\geq 1} \int_\Omega  (dd^c u_j)^n  
\leq \sup_{k \geq 1} \int_\Omega  (dd^c \varphi_k )^n <+\infty.$$

\noindent 
Now, assume that $\rho \in \mathcal F \text{-} PSH^-(\Omega) \cap L^\infty(\Omega)$,  $\sup_\Omega \rho <0$. Thanks to  Proposition 3.4 in \cite{TVH} and   Proposition \ref{pro2-1-1} we have 
\begin{align*}
\int _\Omega (dd^c \max(u, \rho ))^n
& = \sup_{k\geq 1} \int _\Omega (dd^c \max(\varphi _k ,  \rho ))^n
\\& = \sup_{k\geq 1} \int _\Omega (dd^c  \varphi _k )^n
\\& = \sup_{k\geq 1} \sup_{j\geq 1} \int _\Omega (dd^c  \max( u_j,\varphi _k ))^n
=\sup_{j\geq 1} \int_\Omega  (dd^c u_j)^n.
\end{align*}
The proof is complete.
\end{proof}

\begin{proposition} \label{pro4hh}
Let $\Omega$ be a bounded $\mathcal F$-hyperconvex domain in $\mathbb C^n$. Assume that  $u\in \mathcal F  (\Omega) \cap L^\infty(\Omega)$ and $v\in \mathcal F\text{-}PSH^-(\Omega)$ such that   $ (dd^c u)^n \leq  (dd^c v)^n$ in $\Omega \cap   \{v>-\infty\}$.  
Then, $u \geq v$ in $\Omega$. 
\end{proposition}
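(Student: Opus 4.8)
The plan is to show that the $\mathcal F$-open set $\{u<v\}$ is pluripolar and then to upgrade this to the pointwise inequality on all of $\Omega$. The first thing I would record is that $u$ is bounded, say $-M\le u\le 0$, which forces $\{u<v\}\subset\{v>-\infty\}$; hence the hypothesis $(dd^cu)^n\le (dd^cv)^n$ is available on precisely the set where it will be used, and on $\{v=-\infty\}$ the inequality $u\ge v$ is automatic.

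Next I would reduce to the case that $v$ is bounded. For $k\ge M$ set $v_k:=\max(v,-k)\in \mathcal F\text{-}PSH^-(\Omega)\cap L^\infty(\Omega)$, which lies in $\mathcal F(\Omega)$ with finite Monge-Amp\`ere mass by Proposition \ref{pro99991}. It suffices to prove $u\ge v_k$ on $\Omega$ for every $k\ge M$: on $\{v\le -k\}$ one has $v_k=-k\le -M\le u$, so the inequality is trivial there, while on $\{v>-k\}$ it reads $u\ge v$, and since $\{v>-k\}\nearrow\{v>-\infty\}$ the family of these statements recovers exactly $u\ge v$. Moreover $\{u<v_k\}=\{u<v\}\cap\{v>-k\}$, and on this $\mathcal F$-open set $v_k=v$, so $(dd^cv_k)^n=(dd^cv)^n\ge (dd^cu)^n$. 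After relabelling I may therefore assume $u,v\in\mathcal F(\Omega)\cap L^\infty(\Omega)$ with $(dd^cu)^n\le (dd^cv)^n$ on $\{u<v\}$.

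I would then invoke the comparison principle for bounded functions of the class $\mathcal F(\Omega)$ (the $\mathcal F$-analogue established in \cite{TVH}) applied to the pair $u,v$, giving
\[
\int_{\{u<v\}}(dd^cv)^n\le \int_{\{u<v\}}(dd^cu)^n.
\]
Combining this with the hypothesis $(dd^cu)^n\le (dd^cv)^n$ on $\{u<v\}$ yields
\[
\int_{\{u<v\}}(dd^cu)^n=\int_{\{u<v\}}(dd^cv)^n,
\]
and therefore the two Monge-Amp\`ere measures coincide on $\{u<v\}$.

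It remains to convert this equality of measures into the emptiness, modulo a pluripolar set, of $G:=\{u<v\}$. Here I would exploit that $u$ and $v$ are finite, hence $\mathcal F$-continuous, so that fine continuity gives $u=v$ on $\partial_{\mathcal F}G$ quasi-everywhere; thus on $G$ the functions $u\le v$ share their $\mathcal F$-boundary values and carry the same Monge-Amp\`ere measure, and the uniqueness (domination) part of the comparison principle forces $u=v$ q.e.\ on $G$, i.e.\ $G$ is pluripolar. Since $u\ge v$ then holds off a pluripolar set while both $u$ and $v$ are $\mathcal F$-plurisubharmonic, the domination of $\mathcal F$-plurisubharmonic functions across pluripolar sets yields $u\ge v$ everywhere on $\Omega$. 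I expect this last step to be the genuine obstacle: the comparison principle by itself only delivers equality of masses, so the argument must supply the missing strictness, either through a Xing-type strict comparison inequality, through the fine-boundary uniqueness on $G$ described above, or through an $\varepsilon$-perturbation of $v$ by an element of $\mathcal E_0(\Omega)$; handling the pluripolar exceptional sets and the $\mathcal F$-boundary behaviour of $G$ carefully—especially since $\Omega$ may have no Euclidean interior points, so that ``Lebesgue-null'' must be replaced throughout by ``pluripolar''—is where the real work lies.
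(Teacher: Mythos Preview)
Your argument stalls exactly where you say it does, and the remedies you list are not carried out, so the proof is incomplete. From the comparison inequality you only obtain
\[
\int_{\{u<v\}}(dd^c u)^n=\int_{\{u<v\}}(dd^c v)^n,
\]
and equality of masses on $\{u<v\}$ does not by itself force this set to be pluripolar. Your ``fine-boundary uniqueness on $G$'' idea is not justified: $G=\{u<v\}$ is merely an $\mathcal F$-open set with no $\mathcal F$-hyperconvexity or exhaustion structure, so none of the Dirichlet-type uniqueness statements in \cite{TVH} apply to it; and even granting $\mathcal F$-continuity of $u,v$, you would still need a domination principle on $G$ that you have not established. The reduction to bounded $v$ is fine but, as you note, does not touch the real issue.

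The paper closes this gap by a different route, which is in fact one of the alternatives you mention: a weighted (Xing-type) comparison principle, namely Proposition~4.4 of \cite{TVH}, applied not to $v$ but to the multiplicative perturbation $v_j:=(1+\tfrac{1}{j})(v-\tfrac{1}{j})$. The point is that for a suitable small $p>0$ one has
\[
(1+(-u)^p)(dd^c u)^n\le 2(dd^c u)^n\le 2(dd^c v)^n\le (1+(-v_j)^p)(dd^c v_j)^n\quad\text{on }\{v_j>-\infty\},
\]
so the strict factor $(1+\tfrac{1}{j})^n$ absorbs the constant $2$ and produces the weighted inequality needed for Proposition~4.4 of \cite{TVH}, which then gives $u\ge v_j$ pointwise. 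Letting $j\to\infty$ yields $u\ge v$. No reduction to bounded $v$, no analysis of $\{u<v\}$, and no pluripolar exceptional set is needed.
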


\begin{proof}
Without loss of generality  we can assume that $-1\leq u \leq 0$ on $\Omega$. Let $j \in \mathbb N^*$ and define 
$$
v_j:= (1+\frac{1}{j}) (v-\frac{1}{j}) \text{ in } \Omega.
$$
Choose $p>0$ such that 
$ j^p <  1+\frac{1}{j} $.
It is easy to see that 
\begin{align*}
(1+ (-u)^p) (dd^c u)^n 
\leq 2 (dd^c u)^n  
\leq 2 (dd^c v)^n 
\leq (1+ (-v_j)^p) (dd^c v_j)^n 
\text{ on } \Omega \cap   \{v_j>-\infty\}.
\end{align*}
Proposition 4.4 in \cite{TVH} implies that 
$u \geq v_j$ in $\Omega$. 
Letting $j\to+\infty$ we conclude  that $u \geq v$ in $\Omega$. The proof is complete. 
\end{proof}

\section{Proof of Theorem \ref{the1}}

We need the following.

\begin{lemma}  \label{lem1}
Let $\Omega$ be a bounded $\mathcal F$-hyperconvex domain in $\mathbb C^n$ and let  $u ,v \in \mathcal F  (\Omega)$ be such that 

(i) $u\geq v$ in $\Omega$;

(ii)  $(dd^c u)^n \leq   (dd^c v)^n$ on $\Omega \cap \{v>-\infty\}$;

(iii)   $\int_\Omega (dd^c \max(u,-1))^n \geq \int_\Omega (dd^c \max(v,-1))^n $.

\noindent 
Then, $u=v$ in $\Omega$.
\end{lemma}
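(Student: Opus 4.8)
The plan is to reduce to bounded truncations and then upgrade the one-sided Monge--Amp\`ere inequality (ii) into a genuine equality of measures by exploiting the mass comparison (iii); once the two Monge--Amp\`ere measures coincide, the domination principle forces $u=v$. First I would set $a:=\max(u,-1)$ and $b:=\max(v,-1)$. By Proposition \ref{pro99991} both lie in $\mathcal F(\Omega)\cap L^\infty(\Omega)$ with finite total mass, and (i) gives $a\ge b$. Since $u,v<0$ we have $b\le a<0$, so Proposition \ref{pro2-1-1}(ii), applied with the pair $(b,a)$ in place of $(u,v)$, yields $\int_\Omega(dd^ca)^n\le\int_\Omega(dd^cb)^n$; this is exactly the quantity compared in (iii), which supplies the reverse inequality. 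Hence $\int_\Omega(dd^ca)^n=\int_\Omega(dd^cb)^n$. Moreover, on the finely open set $\{v>-1\}$ we have $a=u$ and $b=v$ (because $u\ge v>-1$ forces $u>-1$), so hypothesis (ii) localizes there to $(dd^ca)^n\le(dd^cb)^n$.

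Next I would run the comparison principle from \cite{KW14} and \cite{TVH}. Applied to the finely open set $\{a>b\}$ it gives $\int_{\{a>b\}}(dd^ca)^n\le\int_{\{a>b\}}(dd^cb)^n$, and subtracting this from the equality of total masses one aims to conclude that $(dd^ca)^n=(dd^cb)^n$ on all of $\Omega$. With the two measures equal, Proposition \ref{pro4hh} applies in both directions (from $(dd^ca)^n\le(dd^cb)^n$ it gives $a\ge b$, and from $(dd^cb)^n\le(dd^ca)^n$ it gives $b\ge a$), whence $a=b$. This already yields $u=v$ on $\{v>-1\}$, and since $a=b$ forces $u\le-1$ and $u\ge v=-1$ on $\{v=-1\}$, in fact $u=v$ on the whole set $\{v\ge-1\}$.

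Finally I would iterate over the truncation levels. On each finely open sublevel $G_N:=\{v<-N\}$ the shifted functions $u+N$ and $v+N$ again satisfy (i)--(ii) relative to $G_N$, and they carry the common fine-boundary value $0$ on $\{v=-N\}$, where the previous step has already placed $u=v=-N$; the mass balance needed to rerun the comparison at level $N$ is then read off from the equalities of truncated masses obtained at the earlier levels. Replacing $-1$ by $-N$ and repeating the argument gives $u=v$ on each $\{v>-N\}$, and letting $N\nearrow+\infty$ yields $u=v$ on $\bigcup_N\{v>-N\}=\{v>-\infty\}$; since $\{v=-\infty\}$ is pluripolar and $u,v$ are $\mathcal F$-plurisubharmonic, this forces $u=v$ on all of $\Omega$.

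The hard part is the middle step. Because the $\mathcal F$-Monge--Amp\`ere measures may charge the free boundary $\{v=-1\}$ (and, at later stages, the fine boundaries $\{v=-N\}$ of the sublevel sets), passing from the equality of total masses together with the one-sided inequality on $\{v>-1\}$ to a genuine equality of measures, and then propagating the mass balance to every level, requires careful use of the locality and quasicontinuity of the operator established in \cite{KW14}. The natural device for converting measure equality into the pointwise identity is the energy estimate $\int_\Omega(a-b)\,[(dd^cb)^n-(dd^ca)^n]=\int_\Omega d(a-b)\wedge d^c(a-b)\wedge T\ge0$, where $T:=\sum_{j=0}^{n-1}(dd^cb)^j\wedge(dd^ca)^{n-1-j}\ge0$ and the boundary terms vanish since $a,b$ share the boundary value $0$; equality holds precisely when $a=b$, which is exactly the conclusion sought at each level.
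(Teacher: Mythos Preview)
Your plan has a genuine gap at exactly the point you flag as ``the hard part,'' and the tools you propose to close it are not available in this plurifine setting. From the total mass equality $\int_\Omega(dd^ca)^n=\int_\Omega(dd^cb)^n$ together with a Bedford--Taylor type inequality on $\{a>b\}$ you can only deduce $\int_{\{a=b\}}(dd^ca)^n\ge\int_{\{a=b\}}(dd^cb)^n$, which is far from the pointwise equality of measures you need to feed into Proposition~\ref{pro4hh} in the reverse direction. Hypothesis~(ii) only controls $(dd^ca)^n$ versus $(dd^cb)^n$ on $\{v>-1\}$; on the remaining piece $\{v\le-1\}\cap\{u>-1\}$ you have no information. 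The energy identity you invoke at the end would resolve this in the Euclidean Cegrell framework, but integration by parts of the form $\int(a-b)\bigl[(dd^cb)^n-(dd^ca)^n\bigr]=\int d(a-b)\wedge d^c(a-b)\wedge T$ is not established for $\mathcal F$-plurisubharmonic functions on an $\mathcal F$-hyperconvex domain: the Monge--Amp\`ere operator here is defined only via local decompositions $u=f-g$ (Definition~2.1), and neither \cite{KW14} nor \cite{TVH} supplies a global Stokes-type formula or mixed energy estimates in $\mathcal F(\Omega)$. Your iteration is also not well posed: the sublevel sets $G_N=\{v<-N\}$ are merely $\mathcal F$-open, not $\mathcal F$-hyperconvex domains in the sense of Definition~1.1, so the classes $\mathcal E_0(G_N)$, $\mathcal F(G_N)$ and the propositions you need are not defined there, and the ``mass balance at level $N$'' is not actually read off from anything proved at earlier levels.

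The paper bypasses all of this with a perturbation trick: instead of seeking measure equality, it compares $v$ with $(1-\varepsilon)u_{\varepsilon,\delta}+\delta\rho$, where $\rho(z)=|z|^2-R^2$ is strictly plurisubharmonic. On the set $\{v<u-\delta R^2\}$ the max in $v_{\varepsilon,\delta}$ is attained by the perturbed function, so $(dd^cv_{\varepsilon,\delta})^n$ picks up an extra $\delta^n(dd^c\rho)^n$, which is Lebesgue measure. The mass equality coming from~(iii) and Proposition~\ref{pro99991} then forces the Lebesgue measure of $\{v<u-\delta R^2\}$ to vanish once $\varepsilon\to0$, and letting $\delta\to0$ gives $v\ge u$ directly via Proposition~2.3 of \cite{TVH}. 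No integration by parts, no measure equality, and no iteration over truncation levels are needed.
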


\begin{proof}
Let $R>0$ be such that $\Omega \Subset \mathbb B(0,R)$ and define  $\rho(z):=|z|^2 - R^2$, $z\in \mathbb C^n$. 
Let $\varepsilon ,\delta \in (0, 1)$. We set  
$$
u_{\varepsilon, \delta} : = \max(  u , - 2 \delta R^2 \varepsilon ^{-1})
\text{ and } 
v_{\varepsilon, \delta} : = \max( (1-\varepsilon) u _{\varepsilon, \delta} + \delta \rho , v).
$$ 
Since $\sup_\Omega \rho <0$ and  $u \geq  v$ in $\Omega$, by Proposition   \ref{pro2-1-1}  and Proposition \ref{pro99991} we conclude by  (iii) that  
\begin{equation} \label{eq1.24.10.11}
\begin{split}
\int_\Omega (dd^c v_{\varepsilon, \delta})^n 
& =\int_\Omega (dd^c \max(v,-1))^n 
\\& = \int_\Omega (dd^c \max(u,-1))^n 
=\int_\Omega (dd^c u_{\varepsilon, \delta})^n .
\end{split}
\end{equation}

\noindent 
Since $u_{\varepsilon, \delta} =u$ on $\{v>-2 \delta R^2 \varepsilon^{-1}\}$,  by   Theorem 4.8 in \cite{KW14} and using (ii), we get 
$$
(dd^c v )^n \geq (dd^c u )^n = (dd^c u_{\varepsilon, \delta} )^n 
\text{ on } \{v> - 2 \delta R^2 \varepsilon^{-1} \}.
$$
Hence, Proposition 2.6 in \cite{TVH} implies that 
\begin{equation} \label{eq1}
(dd^c v_{\varepsilon, \delta})^n 
\geq (1-\varepsilon)^n (dd^c u_{\varepsilon, \delta} )^n  
\text{ on } \{v> - 2 \delta R^2 \varepsilon^{-1} \}. 
\end{equation} 
Because 
$v_{\varepsilon, \delta}  = (1-\varepsilon) u_{\varepsilon, \delta} + \delta \rho$ in $\{v<  - \delta R^2 \varepsilon^{-1} \} \cup \{ v< u - \delta R^2\}$,   by Theorem 4.8 in \cite{KW14} we infer that 
$$
(dd^c v_{\varepsilon, \delta})^n \geq  (1-\varepsilon)^n (dd^c u_{\varepsilon, \delta} )^n  + \delta^n  (dd^c \rho)^n
\text{ on } \{v<  - \delta R^2 \varepsilon^{-1} \} \cup \{ v< u - \delta R^2\}.
$$ 
Combining this with \eqref{eq1}   we arrive at 
\begin{align*}  
(dd^c v_{\varepsilon, \delta})^n 
\geq (1-\varepsilon)^n  (dd^c   u_{\varepsilon, \delta}   )^n   
+ \delta^n 1_{\{ v< u - \delta R^2\}} (dd^c \rho)^n
\text{ on } \Omega.
\end{align*}
It follows that 
\begin{align*}
\int_\Omega (dd^c v_{\varepsilon,\delta})^n 
 \geq (1-\varepsilon)^n \int_\Omega (dd^c  u _{\varepsilon,\delta} )^n  
+ \delta^n \int_ {\{ v< u- \delta R^2\}} (dd^c \rho)^n .
\end{align*}
Letting $\varepsilon \to 0$ we conclude by \eqref{eq1.24.10.11} that 
$$
 \int_ {\{ v< u - \delta R^2\}} (dd^c \rho)^n 
 =0, \ \forall \delta>0.
 $$
Therefore, by Proposition 2.3 in \cite{TVH} we infer that $v\geq u$ in $\Omega$, and hence, $u=v$ in $\Omega$. The proof is complete.  
\end{proof}

We now able to give the proof of Theorem \ref{the1}.

\begin{proof}
[Proof of Theorem \ref{the1}]   
Since   $\Omega$ has  the $\mathcal F$-approximation property, so   there exist an increasing  sequence of negative  plurisubharmonic functions  $ \rho_j $  defined on  bounded  hyperconvex domains $\Omega_j$ such that  $\Omega\subset\Omega_{j+1}\subset\Omega_j$ and   $\rho_j\nearrow \rho \in \mathcal E_0(\Omega)$ a.e. on $\Omega$.  
Let $k\in \mathbb N$ be such that $k\geq 1$. Proposition \ref{pro99991} implies that 
$$
\int_\Omega (dd^c \max(u,-k))^n
=\int_\Omega (dd^c \max(u,-1))^n<+\infty.
$$
Since the   measure $1_{\Omega}  (dd^c \max(u, - k) )^n$ vanishes on all pluripolar subsets of $\Omega_j$, by Lemma 5.14  in \cite{Ce2} there exists $u_{j,k} \in \mathcal F (\Omega_j)$ such that 
$$  
(dd^c u_{j,k})^n=1_{ \Omega  }  (dd^c \max(u,-k) )^n \text{ in } \Omega_j. 
$$
Theorem 3.7 in \cite{KH} states that the   function  $u_j:=(\limsup_{k\to+\infty} u_{j,k})^*$ belongs to $\mathcal F(\Omega_j)$, where $^*$ denotes the upper semi-continuous regularization.
By Theorem 5.5 in \cite{Ce2} and Proposition \ref{pro4hh} we infer that 
$u_{j,k} \leq u_{j+1,k} \leq \max(u,-k) \text{ on } \Omega $, and hence,  
$$u_j \leq u_{j+1} \leq u \text{ on }\Omega.$$  
We now claim that 
\begin{equation} \label{9.27.11.11}
(dd^c u_j)^n \geq (dd^c u)^n \text{ on } \Omega\cap \{u>- \infty\} 
\end{equation}
and  
\begin{equation} \label{9.28.11.11}
\int_{\Omega_j} (dd^c u_j)^n 
\leq  \int_\Omega (dd^c \max(u,-1))^n.
\end{equation}
Indeed, fix $a>0$ and let $k\in \mathbb N^*$ be such that $k\geq a$.    
Since 
$$  
(dd^c u_{j,k+s})^n \geq 1_{ \Omega  \cap \{u>-a\} }  (dd^c u)^n \text{ in } \Omega_j, \ \forall s\geq 0,
$$
Proposition 4.3 in \cite{KH} implies that 
$$(dd^c  \max (u_{j,k}, \ldots, u_{j,k+s}))^n  \geq 1_{ \Omega  \cap \{u>-a\} }  (dd^c u)^n \text{ in } \Omega_j, \ \forall s\geq 0.$$
Main Theorem in \cite{Ce3} states that 
$$
(dd^c (\sup_{l\geq 0} u_{j,k+l})^*)^n  \geq 1_{ \Omega  \cap \{u>-a\} }  (dd^c u)^n \text{ in } \Omega_j 
$$ 
because $\max (u_{j,k}, \ldots, u_{j,k+s}) \nearrow (\sup_{l\geq 0} u_{j,k+l})^*$ a.e. in $\Omega_j$ as $s\nearrow+\infty$. Moreover, since $(\sup_{l\geq 0} u_{j,k+l})^* \searrow u_j$ a.e. in $\Omega_j$ as $k\nearrow+\infty$, again by Main Theorem in \cite{Ce3} we infer that 
$$
(dd^c u_j )^n  \geq 1_{ \Omega  \cap \{u>-a\} }  (dd^c u)^n \text{ in } \Omega_j .
$$
Letting $a\to+\infty$, we get
$$
(dd^c u_j)^n \geq (dd^c u)^n \text{ on } \Omega\cap \{u>- \infty\} .
$$
Now, by  Lemma 3.3 in \cite{ACCH} and Corollary 3.4 in \cite{ACCH} we have 
\begin{align*}
\int_{\Omega_j} (dd^c u_j)^n 
& =\lim_{k\to+\infty} \int_{\Omega_j} (dd^c (\sup_{l\geq 0} u_{j,k+l})^*)^n 
\\& \leq \sup_{k \geq 1}  \int_{\Omega_j} (dd^c u_{j,k})^n 
=\int_\Omega (dd^c \max(u,-1))^n.
\end{align*}
This proves the claim. Let $v$  be the least $\mathcal F$-upper semicontinuous majorant of $\sup_{j\geq 1} u_j$ in $\Omega$. 
Then, $v \in \mathcal F\text{-} PSH^{-}(\Omega)$ and $v\leq u$ on $\Omega $.
By Theorem 4.5 in \cite{KS14} and using  \eqref{9.27.11.11} we infer that 
\begin{equation} \label{9.36.11.11}
(dd^c v)^n \geq (dd^c u)^n \text{ on } \Omega\cap \{v>- \infty\}.
\end{equation}

\noindent 
We claim that $v\in \mathcal F  (\Omega)$. Indeed, 
put $v_k:=\max(v,k \rho)$, where $k\in \mathbb N^*$.  
Proposition 3.4 in \cite{TVH} implies that  $v_k\in \mathcal E_0(\Omega)$. Since $\max(u_j, k\rho_j) \nearrow v_k$ a.e. in $\Omega$ as $j\nearrow+\infty$, by    Proposition 2.7 in \cite{TVH} and     Lemma 3.3 in \cite{ACCH} we obtain  by \eqref{9.28.11.11} that 
\begin{align*}
\int_\Omega  (dd^c v_k)^n 
& \leq \liminf_{j\to+\infty} \int_\Omega  (dd^c \max(u_j, k\rho_j))^n  
\\ & \leq \liminf_{j\to+\infty} \int_{\Omega_j} (dd^c u_j)^n 
\leq \int_\Omega (dd^c \max(u,-1))^n.
\end{align*}
Since $v_k \searrow v$, by Proposition \ref{pro99991} we obtain $v\in \mathcal F  (\Omega)$. This proves the claim. 
Now, again by Proposition 2.7 in \cite{TVH} and Proposition 3.4 in \cite{TVH} we have 
\begin{align*}
\int_\Omega (dd^c \max(v,-1))^n
& \leq \liminf_{k\to+\infty} \int_\Omega  (dd^c \max(v_k,-1))^n  
\\ & \leq \liminf_{k\to+\infty} \int_{\Omega}  (dd^c v_k)^n  
\leq \int_\Omega (dd^c \max(u,-1))^n.
\end{align*}
Combining this with \eqref{9.36.11.11} and using   Lemma \ref{lem1}  we conclude  that 
$v=u$ in $\Omega$. 
Thus,  $u_j \nearrow u$ a.e. in $\Omega$ as $j  \nearrow+\infty$. The proof is complete.   
\end{proof}

%% The Appendices part is started with the command \appendix;
%% appendix sections are then done as normal sections
%% \appendix

%% \section{}
%% \label{}

%% References
%%
%% Following citation commands can be used in the body text:
%% Usage of \cite is as follows:
%%   \cite{key}         ==>>  [#]
%%   \cite[chap. 2]{key} ==>> [#, chap. 2]
%%

%% References with BibTeX database:

%\bibliographystyle{elsarticle-num}
%\bibliography{<your-bib-database>}

%% Authors are advised to use a BibTeX database file for their reference list.
%% The provided style file elsarticle-num.bst formats references in the required Procedia style

%% For references without a BibTeX database:

\end{document}